\documentclass[oneside,11pt,a4paper]{article}
\usepackage{amsmath}
\usepackage{amsfonts}
\usepackage{amsthm}
\usepackage{amssymb}
\usepackage{color}
\usepackage[dvips]{graphicx}

\frenchspacing

\makeatletter
\renewcommand{\section}{\@startsection{section}{1}
{\parindent}{3.5ex plus 1ex minus .2ex} {2.3ex plus
.2ex}{\bf\large}}
\renewcommand{\subsection}{\@startsection{subsection}{2}
{\parindent}{3.5ex plus 1ex minus .2ex} {2.3ex plus .2ex}{\large}}
\newtheorem{thm}{Theorem}[section]

\newtheorem{lm}{Lemma}[section]
\newtheorem{de}{Definition}[section]

\newtheorem{co}{Corrolary}[section]

\renewcommand{\l@subsection}{\@dottedtocline{4}{}{}}

\renewcommand{\tableofcontents}%
{\section*{\contentsname}\@starttoc{toc}}

\renewcommand{\@biblabel}[1]{#1.\hfill} \makeatother

\textwidth=15cm \oddsidemargin=2mm \textheight=23cm \topmargin=0mm
\righthyphenmin=2

\addtolength{\topmargin}{-12mm}

\usepackage{epsfig}

\frenchspacing

\begin{document}

\title{On a class of distributions stable under random summation}

\author{L.B. Klebanov\footnotemark[1] \,\,,\,A.V. Kakosyan\footnotemark[2] \,\,,\,S.T. Rachev\footnotemark[3]
\,\,,\,G. Temnov \footnotemark[4] }

\footnotetext[1]{{\bf Affiliation}:\, Department of Probability and
Statistics of Charles University, Prague Sokolovska 83, Prague-8, CZ
18675, Czech
Republic\,.\,\,Email:\,klebanov@chello.cz\\
This work was supported by grants MSM 002160839 and IAA 101120801  }

\footnotetext[2]{{\bf Affiliation}: Yerevan State University,
Yerevan, Armenia  }

\footnotetext[3]{{\bf Affiliation}: University of Karlsruhe, Germany
}

\footnotetext[4]{Corresponding author; {\bf email:
g.temnov@ucc.ie}\,\\{\bf Affiliation}: School of Mathematical
Sciences, University College Cork. \,\,\,Support by SFI research
grant 07/MI/008 associated with Edgeworth Centre for Financial
Mathematics}

\date{\today}
\maketitle

\begin{abstract}
We investigate a family of distributions having a property of
stability-under-addition, provided that the number $\nu$ of added-up
random variables in the random sum is also a random variable. We
call the corresponding property a \,$\nu$-stability and investigate
the situation with the semigroup generated by the generating
function of $\nu$ is commutative.

Using results from the theory of iterations of analytic functions,
we show that the characteristic function of such a $\nu$-stable
distribution can be represented in terms of Chebyshev polynomials,
and for the case of $\nu$-normal distribution, the resulting
characteristic function corresponds to the hyperbolic secant
distribution.

We discuss some specific properties of the class and present
particular examples.
\\[0.5cm]
\end{abstract}
\textbf{Key Words:} Stability, random summation, characteristic
function, hyperbolic secant distribution.










\section{Introduction}

In many applications of probability theory certain specific
classes of distributions have become very useful, usually called
"fat tailed" of "heavy tailed" distributions. The {\it Stable
distributions} that originate from the Central Limit problem, are
probably most popular among the heavy tailed distributions,
however there is a wide collection of classes of distributions,
all related to Stable ones in many various ways, often these
relations are not at all obvious.

Besides, certain generalizations of stable distributions are known,
using sums of random numbers of random variables (instead of sums
with deterministic number of summands), see e.g. Gnedenko
\cite{Gn1983}, Klebanov, Mania, Melamed \cite{KlebanovMM}, for the
examples of such, including the so-called {\it $\nu$-stable}
distributions, introduced independently by Klebanov and Rachev
\cite{KlRa} and Bunge \cite{Bung}.

In the present paper, we focuse on presenting further examples of
strictly $\nu$-stable random variables, that could be useful in
practical applications, including applications in financial
mathematics.


\section{Definition of strictly $\nu$-stable r.v.'s, properties and examples}

In the present section, we give a general insight on strictly
$\nu$-stable distributions and describe some examples that have
been mentioned in the literature before.

\subsection{Basic definitions}

Let $X,X_1,X_2,\dots,X_n,\dots$ be a sequence of i.i.d. random
variables, and let $\{\,\nu_p\,,\,p\in\Delta\,\}$ be a family of
some discrete r.v.'s taking values in the set of natural numbers
$\mathbb{N}$. Assume that this family does not depend on the
sequence $\{X_j,\,j\geq1\}$, and that, for $\Delta\subset(0,1)$,
\begin{equation}
\mathbf{E}\,\left[\,\nu_p\,\right]=\frac{1}{p}\,,\,\,\,\forall\,p\in\Delta\,.
\end{equation}

\begin{de}
We say that the r.v. $X$ has a strictly $\nu$--stable
distribution, if \,\,$\forall\,\,p\in\Delta$ it holds that
\[
X\overset{d}{=}p^{1/\alpha}\sum_{i=1}^{\nu_p}X_j\,,
\]
where $\alpha\in(0,2]$ is called the index of stability.
\end{de}

After this general definition, a narrower class is defined for
$\alpha = 1/2$.
\begin{de}
We call the r.v. $X$ a strictly $\nu$--normal r.v., if\,\,
$\mathbf{E}X=0$, $\mathbf{E}X^2=\infty$, and the following holds:
\[
X\overset{d}{=}p^{1/2}\sum_{i=1}^{\nu_p}X_j\,,\,\,\,\,\forall\,\,p\in\Delta\,\,.
\]
\end{de}

Closely related to the stability property is the property of
infinite divisibility, so we also give the following definition.
\begin{de}
$X$ has a strictly $\nu$--infinitely divisible distribution, if for
any $p\in\Delta$, there exists a r.v. $Y^{(p)}$, s.t.
\[
X\overset{d}{=}\sum_{j=1}^{\nu_p}Y^{(p)}_j\,,\,\,\,\,\mbox{with}\,
\,Y^{(p)},Y^{(p)}_1,\dots,Y^{(p)}_n,\dots\,\,\mbox{being iid
r.v.'s}\,\,.
\]
\end{de}

A powerful tool for investigating distributions' properties is the
{\it generating function}. We shall use the generating function of
the r.v. $\nu_p$\,\, denoting it by
$\mathcal{P}_p(z):=\mathbf{E}\left[z^{\nu_p}\right]$. Moreover, we
denote by $\mathcal{A}$ the semigroup generated by the family
$\{\,\mathcal{P}_p\,,\,\,p\in\Delta\}$, with the operation of the
functions' composition.

\subsection{Summary of the known results}

With regards to the definitions above, the following results are
known (see e.g. \cite{HeavyTailed} for proofs and details).

\begin{thm}
For the family $\{\,\mathcal{P}_p\,,\,\,p\in\Delta\}$, with
\,\,$\mathbf{E}\left[\nu_p\right]=\frac{1}{p}$\,,\,\, there exists
a strictly $\nu$-normal distribution, iff \,\,the semigroup
$\mathcal{A}$ is commutative.
\end{thm}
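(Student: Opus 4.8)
The plan is to recast the distributional identity as a functional equation for the characteristic function and then to exploit the conjugacy it imposes between the composition maps $\mathcal{P}_p$ and scalar multiplications. Writing $\phi(t)=\mathbf{E}[e^{itX}]$ and conditioning on $\nu_p$, the characteristic function of $\sum_{j=1}^{\nu_p}X_j$ is $\mathcal{P}_p(\phi(t))$, so strict $\nu$-normality is equivalent to
\[
\phi(t)=\mathcal{P}_p\bigl(\phi(p^{1/2}t)\bigr),\qquad \forall\,p\in\Delta,\ \forall\,t.
\]
The structural fact that drives everything is that each $\mathcal{P}_p$ fixes $z=1$ with multiplier $\mathcal{P}_p'(1)=\mathbf{E}[\nu_p]=1/p>1$; thus $z=1$ is a common repelling fixed point of the entire family, which is precisely the configuration in which the theory of iterations supplies a linearizing coordinate.

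For the forward implication I would assume a $\nu$-normal $X$ exists and solve the displayed equation for $\mathcal{P}_p$, obtaining $\mathcal{P}_p(z)=\phi\bigl(p^{-1/2}\phi^{-1}(z)\bigr)$ on a neighbourhood of $z=1$. This exhibits $\mathcal{P}_p$ as conjugate, via $\phi$, to multiplication by $p^{-1/2}$ on the $t$-line. Consequently $\mathcal{P}_p\circ\mathcal{P}_q$ is conjugate to multiplication by $(pq)^{-1/2}$, an operation manifestly symmetric in $p$ and $q$; hence $\mathcal{P}_p\circ\mathcal{P}_q=\mathcal{P}_q\circ\mathcal{P}_p$ and the semigroup $\mathcal{A}$ is commutative. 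This direction is routine once the conjugacy is set up.

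The reverse implication is the substantial direction and is where the iteration theory enters. Fixing one $p_0\in\Delta$, Koenigs' theorem at the repelling fixed point $z=1$ yields an analytic $\psi$ normalized by $\psi(1)=0$, $\psi'(1)=1$ with $\psi(\mathcal{P}_{p_0}(z))=(1/p_0)\psi(z)$. The key step is to promote this to a simultaneous linearization: given commutativity, the function $\psi\circ\mathcal{P}_p$ again satisfies the Koenigs equation for $\mathcal{P}_{p_0}$, so by uniqueness of the Koenigs coordinate it must be a scalar multiple of $\psi$, and comparing derivatives at $z=1$ identifies that scalar as $1/p$. Thus $\psi(\mathcal{P}_p(z))=(1/p)\psi(z)$ for every $p\in\Delta$ with one and the same $\psi$. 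Setting $g=\psi\circ\phi$, the functional equation collapses to $g(p^{1/2}t)=p\,g(t)$, i.e. $g(\lambda t)=\lambda^2 g(t)$ for all $\lambda\in\{\,p^{1/2}:p\in\Delta\,\}$; its continuous solutions are $g(t)=t^2\,\Pi(t)$ with $\Pi$ invariant under the multiplicative group generated by these $\lambda$, and one then recovers $\phi=\psi^{-1}\circ g$.

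The main obstacle is the final verification that this formally constructed $\phi$ is a genuine characteristic function satisfying the defining moment conditions $\mathbf{E}X=0$ and $\mathbf{E}X^2=\infty$. One must check continuity, the normalization $\phi(0)=1$, and — the crux — positive-definiteness of $\psi^{-1}\circ g$ globally in $t$, which requires controlling the inverse Koenigs coordinate far from the fixed point. I expect the moment requirement to be genuinely restrictive: it constrains the admissible modulation $\Pi$ and couples it to the arithmetic richness of $\Delta$, since for $\Delta$ containing an interval the equation forces $g(t)=c\,t^2$ and hence a finite-variance solution, whereas the infinite-variance regime demands a log-periodic $\Pi$. It is exactly here that the explicit form of $\psi$, its Chebyshev-polynomial expansion, and the identification of $\phi$ with the hyperbolic secant characteristic function advertised in the abstract should emerge, so I would devote the bulk of the argument to establishing positive-definiteness of $\psi^{-1}\circ g$.
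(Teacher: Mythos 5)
First, a point of reference: the paper itself does not prove this theorem at all --- it quotes it as known, with the proof delegated to \cite{HeavyTailed} --- so your proposal can only be measured against the standard argument and against the machinery the paper assembles in Section 3 (the Poincar\'{e} equation $F\circ\Lambda=\mathcal{P}\circ F$ and the fact that commuting functions share the linearizer $F$). Your architecture matches that machinery closely: your Koenigs coordinate $\psi$ is $F^{-1}$, your promotion to simultaneous linearization (uniqueness of the Koenigs coordinate for $\mathcal{P}_{p_0}$, then comparison of derivatives at $z=1$ to identify the scalar as $1/p$) is correct and is exactly the right way to exploit commutativity, and your reduction to $g(p^{1/2}t)=p\,g(t)$ with $\phi=\psi^{-1}\circ g$ reproduces the paper's Properties 1--2, i.e.\ $f(t)=\varphi(at^2)$ with $\varphi(t)=\psi^{-1}(-t)$. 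The forward direction is also sound in spirit but broken as written: since $\mathbf{E}X=0$, one has $\phi'(0)=0$, so $\phi$ has no local inverse at $t=0$ and the formula $\mathcal{P}_p(z)=\phi\bigl(p^{-1/2}\phi^{-1}(z)\bigr)$ is not defined. The standard repair avoids inversion entirely: apply the defining identity twice in both orders to get $\mathcal{P}_p\bigl(\mathcal{P}_q(\phi((pq)^{1/2}t))\bigr)=\mathcal{P}_q\bigl(\mathcal{P}_p(\phi((pq)^{1/2}t))\bigr)$ for all $t$, and note that the range of $\phi$ is a connected set containing $1$ and (for non-degenerate $X$) points of modulus less than one, hence has an accumulation point inside the unit disk, so analyticity of the two compositions forces them to coincide on the whole disk.

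The genuine gap is in the existence direction, and you have named it yourself without closing it: nothing in the proposal shows that $\psi^{-1}\circ g$ is positive definite, and that is the entire substance of this implication --- everything preceding it is formal manipulation, valid only near the fixed point. Announcing that one ``would devote the bulk of the argument'' to this is not a proof. The way the claim is actually closed is probabilistic rather than complex-analytic: for fixed $p_0\in\Delta$, $\mathcal{P}_{p_0}^{0n}$ is the generating function of the $n$-th generation $Z_n$ of a supercritical Galton--Watson process with offspring law $\nu_{p_0}$, and $p_0^nZ_n$ converges to a nonnegative limit $W$ whose Laplace transform $\varphi(t)=\mathbf{E}\,e^{-tW}$ satisfies precisely $\varphi(t)=\mathcal{P}_{p_0}(\varphi(p_0t))$ with $\varphi(0)=1$, $\varphi'(0)=-1$; complete monotonicity of $\varphi$, i.e.\ the representation (\ref{O5}), is then automatic, and $f(t)=\varphi(at^2)$ is a characteristic function because it is a variance mixture of normals, which is the content of (\ref{O6}). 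Commutativity then transfers this single equation to the whole system exactly by your uniqueness step. Two further defects: Koenigs' theorem requires $\mathcal{P}_{p_0}$ to be analytic in a full neighbourhood of $z=1$, which a generating function need not be (analyticity is guaranteed only on the open disk; the branching-process construction needs no such hypothesis); and your closing speculation about an ``infinite-variance regime'' with log-periodic $\Pi$ chases a misprint --- the paper's own $\nu$-normal laws (Laplace, hyperbolic secant) all have finite variance, so $\mathbf{E}X^2=\infty$ in Definition 2.2 should read $\mathbf{E}X^2<\infty$, and the normalization $\varphi'(0)=-1$ forces $\Pi$ to be constant in any case, since a multiplicatively periodic continuous function with a limit at the origin is constant.
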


Suppose that we have a commutative semigroup $\mathcal{A}$. Then the
following statements (that we refer to in the sequel as {\it
Properties}) are known to be true (see \cite{HeavyTailed} for proofs
and details):
\begin{enumerate}
\item
The system
\begin{equation}\label{O3}
\varphi(t)=\mathcal{P}_p(\varphi(pt)),\,\,\,\forall\,p\in\Delta\,.
\end{equation}
of functional equations has a solution that satisfies the initial
conditions
\begin{equation}\label{O4}
\varphi(0)=1\,,\,\,\,\varphi^{\prime}(0)=-1\,.
\end{equation}
The solution is unique. In addition, there exists a distribution
function (cdf) $A(x)$ \,\,(with $A(0)=0$)\,such that
\begin{equation}\label{O5}
\varphi(t)=\int\limits_0^\infty e^{-tx}dA(x).
\end{equation}

\item
The characteristic function (ch.f.) of the strictly $\nu$-normal
distribution has the form
\begin{equation}\label{O6}
f(t)=\varphi(at^2)\,,\,\,\,\,a>0\,.
\end{equation}

\item
A ch.f. $g(t)$ is a ch.f. of a $\nu$-infinitely divisible r.v., iff
there exists a chf $h(t)$ of an infinitely divisible (in the usual
sense) r.v., such that
\begin{equation}\label{InfDiv}
f(t)=\varphi(-\ln h(t)).
\end{equation}

\end{enumerate}

The relation (\ref{InfDiv}) allows obtaining explicit
representations of ch.f. of strictly $\nu$-stable distributions.
Clearly, they are obtained through applying (\ref{InfDiv}) to a
ch.f. \,$h(t)$\,, provided that the r.v. corresponding to
\,$h(t)$\, is strictly stable (in the usual sense). Moreover, note
that the ch.f. $\varphi(ait)$,\, $a\in\mathbb{R}^1$, is the ch.f.
of an analogue of the degenerate r.v., and that for the r.v. with
such ch.f. the following analogue of the Law of Large Numbers
exists.

\begin{thm}
Let $X_1,X_2,\dots,X_n,\dots$ be a sequence of iid random
variables with the finite absolute value of the first moment, and
$\{\,\nu_p\,,\,p\in\Delta\,\}$ a family of r.v.'s taking values in
$\mathbb{N}$, independent of the sequence $\{X_j,j=1,2,\dots\}$.
Assume that $\mathbf{E}\left[\nu_p\right]=\frac{1}{p}$ and that
the semigroup $\mathcal{A}$ is commutative. \par Then the series
$p\sum\limits_{j=1}^{\nu_p}X_j$ is convergent is distribution, as
$p\rightarrow0$, and the limit of convergence is a r.v. having the
ch.f.\,\, $\varphi(ait)$.
\end{thm}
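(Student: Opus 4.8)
The plan is to pass to characteristic functions and apply Lévy's continuity theorem, using the functional equation \eqref{O3} to tame the random index. Write $f(t)=\mathbf{E}[e^{itX_1}]$ for the common characteristic function of the summands and set $S_p=p\sum_{j=1}^{\nu_p}X_j$. Conditioning on $\nu_p$ and using the independence of $\{\nu_p\}$ from $\{X_j\}$ together with the definition $\mathcal{P}_p(z)=\mathbf{E}[z^{\nu_p}]$, the characteristic function of $S_p$ is
\[
g_p(t)=\mathbf{E}\big[f(pt)^{\nu_p}\big]=\mathcal{P}_p\big(f(pt)\big).
\]
By Lévy's theorem it then suffices to show that $g_p(t)\to\varphi(ait)$ pointwise as $p\to0$ for a suitable real $a$, the limit being continuous at $t=0$ because $\varphi$ is (it is a Laplace transform by \eqref{O5}).

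The key step is to rewrite $\mathcal{P}_p$ through the functional equation. Since $\varphi$ is analytic with $\varphi'(0)=-1\neq0$ by \eqref{O4}, it is locally invertible near $0$, and \eqref{O3}, read as $\mathcal{P}_p(\varphi(ps))=\varphi(s)$, becomes
\[
\mathcal{P}_p(w)=\varphi\!\left(\tfrac{1}{p}\,\varphi^{-1}(w)\right)
\]
for $w$ in a neighbourhood of $1$. This identity is the heart of the argument: it replaces the outer map $\mathcal{P}_p$, whose derivative at $1$ equals $\mathbf{E}[\nu_p]=1/p\to\infty$, by the fixed function $\varphi$ evaluated at a rescaled argument. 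Applying it with $w=f(pt)$ gives $g_p(t)=\varphi\big(\varphi^{-1}(f(pt))/p\big)$.

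It remains to compute the asymptotics of the rescaled argument. Because $\mathbf{E}|X_1|<\infty$, the characteristic function expands as $f(pt)=1+i(\mathbf{E}X_1)pt+o(p)$, while \eqref{O4} gives $\varphi(\rho)=1-\rho+o(\rho)$ and hence $\varphi^{-1}(w)=-(w-1)+o(w-1)$ near $w=1$. Composing, $\varphi^{-1}(f(pt))=-i(\mathbf{E}X_1)pt+o(p)$, so $\tfrac1p\varphi^{-1}(f(pt))\to -i(\mathbf{E}X_1)t$, and continuity of $\varphi$ yields $g_p(t)\to\varphi(-i(\mathbf{E}X_1)t)$. This is $\varphi(ait)$ with $a=-\mathbf{E}X_1$, completing the identification; the probabilistic content is that the limit equals $(\mathbf{E}X_1)\,W$, where $W$ is the nonnegative r.v. with distribution function $A$ from \eqref{O5}.

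The main obstacle is making the inversion step fully rigorous. One must check that $\varphi$ extends to a genuine neighbourhood of $0$ in $\mathbb{C}$ and is injective there, so that $\varphi^{-1}(f(pt))$ is well defined for the complex values $f(pt)$ lying just inside the unit disc, and one must verify that the $o(p)$ remainders are uniform enough to survive division by $p$. Equivalently, one may bound $|\mathcal{P}_p(f(pt))-\mathcal{P}_p(\varphi(-i(\mathbf{E}X_1)pt))|$ directly; the difficulty there is that $\mathcal{P}_p''(1)=\mathbf{E}[\nu_p(\nu_p-1)]$ can be of order $1/p^2$, so a first-order expansion of $\mathcal{P}_p$ does not suffice and one needs uniform control of $\mathcal{P}_p'$ on a shrinking neighbourhood of $1$, drawing on the analyticity of $\mathcal{P}_p$ on the unit disc (where $|\mathcal{P}_p|\le1$) and on the commutativity of $\mathcal{A}$, which guarantees the functional equation. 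I expect this uniform-control estimate to be where the real work lies.
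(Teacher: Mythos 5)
Your overall strategy---pass to characteristic functions, write $g_p(t)=\mathcal{P}_p(f(pt))$, use the functional equation (\ref{O3}) to identify the limit, and finish with L\'{e}vy's continuity theorem---is a genuinely different route from the paper's. The paper's proof is essentially a citation: Property 1 supplies $\varphi$ and the cdf $A$ of (\ref{O5}), one checks that $p\,\nu_p$ converges in distribution to $A$, and Gnedenko's Transfer Theorem, combined with the weak law of large numbers for the deterministic-index sums $p\sum_{j=1}^{[1/p]}X_j\Rightarrow\mathbf{E}X_1$, yields the conclusion. Your identification of the limit ch.f. as $\varphi(-i(\mathbf{E}X_1)t)$, i.e. $\varphi(ait)$ with $a=-\mathbf{E}X_1$, agrees with the statement.

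However, your primary route through $\varphi^{-1}$ has a genuine gap, and it is not merely a matter of ``making the inversion rigorous''. Property 1 defines $\varphi$ only as a Laplace transform: analytic on the open right half-plane, continuous up to the imaginary axis. Nothing in the hypotheses guarantees an analytic continuation of $\varphi$ to a complex neighbourhood of $0$ (that would require $A$ to have exponential moments). Worse, even inversion within the closed half-plane can fail outright: take $X_j\equiv c$ degenerate, so that $f(pt)=e^{icpt}$ lies on the unit circle; in the geometric example $\varphi(s)=1/(1+s)$, the image of the closed right half-plane meets the unit circle only at the point $w=1$, so $\varphi^{-1}(f(pt))$ is undefined for every $t\neq0$. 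Thus the identity $\mathcal{P}_p(w)=\varphi\bigl(\varphi^{-1}(w)/p\bigr)$ cannot in general be evaluated at $w=f(pt)$.

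The good news is that your fallback---bounding $|\mathcal{P}_p(f(pt))-\mathcal{P}_p(\varphi(-i\mu pt))|$ directly, with $\mu=\mathbf{E}X_1$---is the correct proof, and the ``uniform control of $\mathcal{P}_p^{\prime}$'' that you expect to be the real work is a one-line consequence of positivity of coefficients; neither commutativity (beyond its role in Property 1) nor any second-order information about $\mathcal{P}_p$ enters. Since $\mathcal{P}_p(z)=\sum_k\mathbf{P}\{\nu_p=k\}z^k$ has nonnegative coefficients,
\[
|\mathcal{P}_p^{\prime}(z)|\;\le\;\sum_k k\,\mathbf{P}\{\nu_p=k\}\;=\;\mathbf{E}\left[\nu_p\right]\;=\;\frac{1}{p}\,,\qquad |z|\le1\,,
\]
so $\mathcal{P}_p$ is $(1/p)$-Lipschitz on the closed unit disc. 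Moreover (\ref{O3}) extends from $(0,\infty)$ to the closed right half-plane (both sides are analytic in its interior and continuous up to the boundary, and they agree on a ray), so $\varphi(-i\mu t)=\mathcal{P}_p\bigl(\varphi(-i\mu pt)\bigr)$. Since $f(pt)=1+i\mu pt+o(p)$ (because $\mathbf{E}|X_1|<\infty$) and $\varphi(-i\mu pt)=1+i\mu pt+o(p)$ (because $A$ has mean $1$ by (\ref{O4})), the two arguments differ by $o(p)$, whence for every fixed $t$
\[
\bigl|\mathcal{P}_p(f(pt))-\varphi(-i\mu t)\bigr|\;\le\;\frac{1}{p}\,\bigl|f(pt)-\varphi(-i\mu pt)\bigr|\;=\;\frac{o(p)}{p}\;\longrightarrow\;0\,.
\]
Your worry about $\mathcal{P}_p^{\prime\prime}(1)$ being of order $1/p^2$ is immaterial: no Taylor expansion of $\mathcal{P}_p$ is needed, only the sup bound on $\mathcal{P}_p^{\prime}$, and the factor $1/p$ is exactly what the $o(p)$ closeness of the two arguments can absorb. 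With this estimate in place, L\'{e}vy's theorem (the limit being continuous at $t=0$) completes your proof.
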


The proof of this theorem follows straightforwardly from the {\it
Property 1} outlined above and from the {\it Transfer Theorem} of
Gnedenko, see e.g. \cite{GnedKor}.

\vspace*{0.5cm} In the following paragraph we discuss several
particular examples of strictly $\nu$-normal and strictly
$\nu$-stable distributions.

\vspace*{-0.2cm}
\subsection{Examples and the outline of the problem}

{\bf Example 1\,.}\; {\it The usual stability.}\\ \noindent Assume
the following setup\,:\,\,\,$\nu_p=\frac{1}{p}$ \,\,with
probability $1$,\,where
$p\in\Delta=\left\{1,\frac{1}{2},\dots,\frac{1}{n},\dots\right\}$\,,
and so $\mathcal{P}_p(z)=z^{1/p}$\,. \par Clearly, the
corresponding semigroup $\mathcal{A}$ is commutative.\par
Furthermore, $\varphi(t)=\exp\{-t\}=\int\limits_0^\infty
e^{-tx}dA(x)$, where $A(x)$ is a cdf with a single unit-sized jump
at $x=1$. In this setup the strictly $\nu$-normal ch.f. is the
ch.f. of the normal (in the usual sense) r.v. with the zero mean.
\par

{\bf Example 2\,.}\;{\it The geometric summation scheme.}\\
\noindent
Suppose, $\nu_p$ is the r.v. having a geometric distribution
\[
\mathbf{P}\{\nu_p=k\}=p(1-p)^{k-1}\,,\,\,\,k=1,2,\dots\,\,\,,\,p\in(0,1)\,.
\]

Clearly, here $\mathbf{E}\left[\nu_p\right]=\frac{1}{p}$\,,\,\,and
$\mathcal{P}_p(z)=\frac{pz}{1-(1-p)z}$\,,\,\,$p\in(0,1)$\,. It is
quite straightforward to check that $\mathcal{A}$ is commutative.
\par Moreover, a direct calculation gives
$\varphi(t)=\frac{1}{1+t}=\int\limits_0^\infty e^{-tx}e^{-x}dx$,
i.e. $A(x)$ is the cdf of the exponential distribution. So that a
$\nu$-analogue of the strictly normal distribution is the Laplace
distribution with the ch.f.\, $f(t)=\frac{1}{1+at^2}$\,\,.

{\bf Example 3\,.}\;{\it Branching process scheme.}\\
\noindent
Let $\mathcal{P}(z)$ be some generating function, with
$\mathcal{P}^{\prime}(1)=\frac{1}{p_0}>1$ (so that the introduced
notation is $p_0=1/\mathcal{P}^{\prime}(1)$, with the condition
$p_0<1$).

Consider now a family given by
$\mathcal{P}^{0\,n}(z)=\mathcal{P}^{0(n-1)}\left(\mathcal{P}(z)\right)$\,,\,\,$n=1,2,\dots$\,.
Related to that is another family of the r.v.'s
$\nu_p$\,:\,\,$\mathcal{P}_p(z)=\mathcal{P}^{0\,n}(z)\,,\,\,p\in\left\{\frac{1}{p_0^n}\,,
\,\,n=1,2,\dots\right\}=:\Delta$\,.

Clearly, the semigroup $\mathcal{A}$ coincides with the family
$\{\mathcal{P}_p\,,\,\,p\in\Delta\}$\,.\,\,The ch.f. $\varphi(t)$
is a solution of the functional equation
$\varphi(t)=\mathcal{P}\left(\varphi(p_0t)\right)$\,\,.

It can be noted that the content of the paper by Mallows and Shepp
\cite{SheppMall1} is actually based on considering an example
identical to the {\it Example 3} above. Probably, neither the
authors of that work nor its reviewers were familiar with the
works by Klebanov and Rachev \cite{KlRa} and Bunge \cite{Bung},
which had dealt with exactly the same example a number of years
earlier.

Like mentioned in Introduction, in the present work we aim in
widening the collection of examples that involve random summation
with the commutative semigroup $\mathcal{A}$. For that purpose, we
address the description of pairs of certain commutative generating
functions $\mathcal{P}$ and $\mathcal{Q}$, i.e. the ones for which
the balance equality
$\mathcal{P}\circ\mathcal{Q}=\mathcal{Q}\circ\mathcal{P}$ holds,
-- but including only the case when {\it there exists no} such
function $\mathcal{H}$ such that $\mathcal{P}=\mathcal{H}^{0k}$
and $\mathcal{Q}=\mathcal{H}^{0m}$\,\,for some
$k,m\in\mathbb{N}$\,\,(which would be exactly the case of the {\it
Example 3}).

In a general setting, the problem of describing all such
commutative pairs of generating functions appears, unfortunately,
 far too involved to approach. However, certain special cases
are rather straightforward for consideration. In order to approach
the problem, we will use certain notions typical for the theory of
iterations of analytic functions, that we outline in the separate
section below.

\section{Theoretic justification via iterations of analytic functions}

Let $\mathcal{P}$ be a rational function with $(\deg)\geq2$.
Denote by $\mathcal{P}^{0n}$ its $n$th iteration. The functions
$\mathcal{P}$ and $\mathcal{Q}$ are called {\it conjugates}, if
there exists a linear-fractional function $R$, such that
$\mathcal{P}\circ R=R\circ\mathcal{Q}$\,.

A subset $E$ of the extended complex plane $\overline{\mathbb{C}}$
is called {\it completely invariant}, if its complete inverse image $\mathcal{P}^{-1}(E)$
coincides with $E$. The maximal finite completely invariant set $E(\mathcal{P})$ exists and
is called the {\it exceptional set} of the function
$\mathcal{P}$\,.\,It is always the case that $\mbox{card}\,
E(\mathcal{P})\leq2$\,. Moreover, if $\mbox{card}\,
E(\mathcal{P})=1$ then the function $\mathcal{P}$ is a conjugate
to a polynomial, while for $\mbox{card}\,E(\mathcal{P})=2$ the
function $\mathcal{P}$ is a conjugate to
$\mathcal{Q}(z)=z^n\,,\,\,n\in\mathbb{Z}\backslash\{0,1\}$\,.
Clearly, $E(\mathcal{Q})=\{0,\infty\}$.

If $\mathcal{P}$ is a rational function, then it is known (see e.g.
\cite{HeavyTailed}) that there is a finite number of open sets
$F_i$\,,\,\,\,$i=1,\dots,r$\,,\,\,which are {\it left invariant} by
the operator $\mathcal{P}$ and are such that (in the sequel, we will
refer to the two points below as {\it Conditions})
\begin{enumerate}
\item the union $\bigcup\limits_{i=1}^rF_i$ is {\it dense} on the
plane\,;
\item and $\mathcal{P}$ behaves {\it regularly} and in a unique
way on each of the sets $F_i$\,.
\end{enumerate}
The latter means that the termini of the sequences of iterations
generated by the points of $F_j$ are either precisely the same set,
which is then a finite cycle, or they are finite cycles of finite or
annular shaped sets that are lying concentrically. In the first case
the cycle is {\it attracting}, in the second one it is {\it
neutral}.

The sets $F_j$ are the {\it Fatou domains} of $\mathcal{P}$, and
their union is the {\it Fatou set}\, $F(\mathcal{P})$ of
$\mathcal{P}$.

The complement of $F(\mathcal{P})$ is the {\it Julia set}
\,$\mathcal{J}(\mathcal{P})$ of $\mathcal{P}$\,. Note that
$\mathcal{J}(\mathcal{P})$ is either a nowhere dense set (that is,
without interior points) and an uncountable set (of the same
cardinality as the real numbers), or
$\mathcal{J}(\mathcal{P})=\overline{\mathbb{C}}$\,. Like
$F(\mathcal{P})$, $\mathcal{J}(\mathcal{P})$ is left invariant by
$\mathcal{P}$, and on this set the iteration is {\it repelling},
meaning that
$\,\,|\,\mathcal{P}(z)-\mathcal{P}(w)\,|\,>\,|\,z-w\,|\,\,$ for
all elements $w$ in a neighborhood of $z$ (within
$\mathcal{J}(\mathcal{P})$). This means that $\mathcal{P}(z)$
behaves chaotically on the Julia set. Although there are points in
the Julia set whose sequence of iterations is finite, there is
only a countable number of such points (and they make up an
infinitely small part of the Julia set). The sequences generated
by points outside this set behave chaotically, a phenomenon called
{\it deterministic chaos}. Let $z_0$ be a repelling fixed point of
the function $\mathcal{P}$, and let
$\lambda=\mathcal{P}^{\prime}(z_0)$. Define
$\Lambda\,:\,\,z\rightarrow\lambda z$\,. Then there exists a
unique solution of the Poincar\'{e} equation
\[
F\circ\Lambda=\mathcal{P}\circ
F\,,\,\,\,\,F(0)=z_0\,,\,\,\,F^{\prime}(0)=1\,,
\]
that is meromorphic in $\overline{\mathbb{C}}$\,.

Now let
\[
\mathcal{I}(\mathcal{P})=F^{-1}(\mathcal{J}(\mathcal{P}))\,.
\]
If for two functions $\mathcal{P}$ and $\mathcal{Q}$ we have
$\mathcal{P}\circ\mathcal{Q}=\mathcal{Q}\circ\mathcal{P}$, then they
have the same function $F$.

\vspace*{0.2cm} There are the two following possibilities:
\vspace*{-0.2cm}
\begin{enumerate}
\item $\mathcal{I}(\mathcal{P})=\mathbb{C}$\,,\,\,in which case
$\mathcal{J}(\mathcal{P})=\overline{\mathbb{C}}$,.
\item $\mathcal{I}(\mathcal{P})$ is nowhere dense and consists of analytic cuvrves.
\end{enumerate}

Fatou \,\cite{Fatou}, and Julia \,\cite{Julia} investigated the
case. It turned out that is this case $\mathcal{P}$ and
$\mathcal{Q}$ can be reduced by a conjugancy either to the form
$\mathcal{P}(z)=z^m$ and $\mathcal{Q}(z)=z^n$ or to the form
$\mathcal{P}(z)=T_m(z)$ and $\mathcal{Q}(z)=T_n(z)$\,,\,where $T_k$
is the Chebyshev polynomial determined by the equation\,
$\cos(k\zeta)=T_k(\cos\zeta)$.

\section{Main results}

\subsection{A new example}

Let us return to the study of $\nu$-normal and $\nu$-stable random
variables. Recall that we deal with the family
$\{\nu_p\,,\,\,p\in\Delta\}$ taking its values in
$\mathbb{N}=\{1,2,\dots\}$\,. As before, we work with the
generating function\,,\,
$\mathcal{P}_p(z)=\mathbf{E}\left[\,z^{\nu_p}\,\right]$\,,\,of
$\nu_p$\,. The important result that we stressed says the a
strictly $\nu$-normal (resp. strictly $\nu$-stable) r.v. exist iff
the semigroup $\mathcal{A}$ generated by
$\{\mathcal{P}_p\,,\,\,p\in\Delta\}$\,\, is commutative. If
$\mathcal{P}_p\,,\,\,p\in\Delta$\,,\, is a rational function (with
$\deg\leq2$) satisfying {\it Condition 2}\, of the above section,
then either $\mathcal{P}_p(z)$ is reduced to a form
$\widetilde{\mathcal{P}}_p(z)=z^{1/p}$\,,\,\,$p\in\left\{\frac{1}{n}\,,\,n=1,2,\dots\right\}$,
and then we deal, in fact, with the classical (deterministic)
summation scheme, or $\mathcal{P}_p(z)$ is reduced to the form
$\mathcal{P}_p(z)=T_{1/\sqrt{p}}(z)$\,,\,\,$p\in\left\{\frac{1}{n^2}\,,\,n=1,2,\dots\right\}$\,.
Clearly, the polynomial $T_m(z)$ is not a generating function
itself, however a function to which it is a conjugate,
specifically the function
\begin{equation}\label{Cheb}
\mathcal{P}_p(z)=\frac{1}{T_{1/\sqrt{p}}(z)}\,\,,\,\,\,p\in\left\{\frac{1}{n^2}\,,\,
n=1,2,\dots\right\}\,\,,
\end{equation}
is indeed a generating function, -- the fact that we prove below.
Moreover, below we consider in some details a family of r.v.'s
$\left\{\nu_p\,,\,p\in\left\{\frac{1}{n}\,,\,n=1,2,\dots\right\}\,\right\}$
that have generating functions of the form (\ref{Cheb}), and
investigate the corresponding strictly $\nu$-normal and strictly
$\nu$-stable distributions.

\begin{lm}\label{Lem1}
Let $P_n(x)$ be a polynomial with \,$\deg P_n=k$\,\, by the even
powers of $x$, and whose zeros are all within the interval
$(-1,1)$\,.\,\,Let \,$P_n(1)=1$ and polynomial's coefficient with
with $x^n$ be positive. Then for any natural number $k$, the
function
\[
\mathcal{P}(x)=\frac{x^k}{P_n(\frac{1}{x})}
\]
is a generating function.
\end{lm}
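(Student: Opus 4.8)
The plan is to verify directly the two defining features of a probability generating function of an $\mathbb{N}$-valued random variable: that $\mathcal{P}(1)=1$, and that the Taylor expansion of $\mathcal{P}$ about the origin has only non-negative coefficients with vanishing constant term (so that the associated variable is supported on $\{1,2,\dots\}$). Normalization is immediate from $P_n(1)=1$, since
\[
\mathcal{P}(1)=\frac{1^k}{P_n(1/1)}=\frac{1}{P_n(1)}=1 .
\]
The whole content of the lemma therefore sits in the non-negativity of the coefficients, and that is precisely where the hypotheses on the location of the zeros enter.

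First I would rewrite $\mathcal{P}$ so as to expose its poles. Denote the zeros of $P_n$ by $r_1,\dots,r_k\in(-1,1)$; since only even powers of $x$ occur, these come in $\pm$ pairs, so writing $P_n(x)=Q(x^2)$ the zeros of $Q$ are the squares $s_i:=r_i^2\in[0,1)$. Let $a_k$ be the leading coefficient of $P_n$; it equals the leading coefficient of $Q$, and $a_k>0$ is in fact forced, because $P_n(1)=a_k\prod_i(1-r_i)$ with every factor $1-r_i>0$, consistent with the stated hypothesis on the top coefficient. Substituting $1/x$ into the factored form of $P_n$ and collecting powers of $x$ then yields
\[
\mathcal{P}(x)=\frac{x^{2k}}{a_k\prod_{i}\left(1-s_i x^2\right)} .
\]
Since every $s_i<1$, each pole of $\mathcal{P}$ satisfies $|x|=1/\sqrt{s_i}>1$, so $\mathcal{P}$ is analytic on a disc of radius strictly larger than $1$.

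The key step is the expansion. Each factor $1/(1-s_i x^2)=\sum_{j\ge0}s_i^{\,j}x^{2j}$ has non-negative coefficients because $s_i\ge0$, and a finite product of power series with non-negative coefficients again has non-negative coefficients; multiplying by $x^{2k}/a_k$ with $a_k>0$ preserves this. Hence $\mathcal{P}(x)=\sum_{j\ge0}c_j x^{\,j}$ with every $c_j\ge0$, and the expansion begins at $x^{2k}$, so $c_0=0$. Because the radius of convergence exceeds $1$, the series converges at $x=1$, where it must equal $\mathcal{P}(1)=1$, giving $\sum_j c_j=1$. Non-negative coefficients summing to one, with vanishing constant term, are exactly the probabilities of an $\mathbb{N}$-valued random variable, so $\mathcal{P}$ is a generating function.

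I expect the main obstacle to be the middle step: converting the geometric hypothesis ``all zeros lie in $(-1,1)$'' into the algebraic normal form $\prod_i(1-s_i x^2)$ with each $s_i\in[0,1)$. Everything downstream (the geometric expansion, non-negativity, and normalization) is then routine, but this reduction is exactly where one must use that the zeros are real, lie strictly inside $(-1,1)$, and that only even powers occur so that they pair into non-negative squares $s_i$; if any of these failed, the reciprocal polynomial could acquire negative Taylor coefficients and the conclusion would break.
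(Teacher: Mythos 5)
Your proposal is correct and takes essentially the same route as the paper's own proof: factor $P_n$ by its zeros, pair them as $\pm a_j$ using evenness, rewrite $1/P_n(1/x)$ as a product of factors $x^2/(1-a_j^2x^2)$, expand each as a geometric series with non-negative coefficients (convergent for $|x|\le 1$ since the zeros lie strictly inside $(-1,1)$), and conclude from $\mathcal{P}(1)=1$. Your two small additions --- observing that positivity of the leading coefficient is already forced by $P_n(1)=1$ with zeros in $(-1,1)$, and explicitly checking the vanishing constant term so the variable is supported on $\{1,2,\dots\}$ --- are minor refinements of the same argument.
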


\begin{proof}
Represent $P_n(x)$ as
\[ P_n(x)=b_0+b_1x+\dots+b_nx^n=b_n\prod\limits_{j=1}^n\left(x-a_j\right)\,, \]
where $a_j$ \,($j=1,\dots,n$)\,\,are the zeros of the polynomial
$P_n$\,\,\,sorted in the order of ascendance. As $P_n$ is a
polynomial by the even powers of $x$, then, if $a_j$ is a zero of
$P_n$, then $-a_j$ is also a zero of $P_n$. Therefore,
\begin{eqnarray}\label{Series}
\nonumber \frac{1}{P_n(\frac{1}{x})}&=&\frac{1}{b_n\prod\limits_{j=1}^n\left(\frac{1}{x}-a_j\right)}\\
\nonumber
&=&\frac{1}{b_n\prod\limits_{j=1}^{n/2}\left(\frac{1}{x}-a_j\right)\left(\frac{1}{x}+a_j\right)}\\
\nonumber &=&\frac{1}{b_n}\prod\limits_{j=1}^{n/2}\frac{1}{\left(\frac{1}{x}-a_j\right)
\left(\frac{1}{x}+a_j\right)}\\
&=&\frac{1}{b_n}\prod\limits_{j=1}^{n/2}\frac{x^2}{1-a_j^2x^2}
\end{eqnarray}
Obviously,
\[
\frac{x^2}{1-a_j^2x^2}=\sum\limits_{k=0}^{\infty}a^{2k}_jx^{2k+2}
\]
is a series with positive (non-negative) coefficients, converging
when $|x|\leq1$. From (\ref{Series}), it now follows that\,
$\mathcal{P}(x)=\frac{x^k}{P_n(\frac{1}{x})}$\, is a series also
convergent when $|x|\leq1$, having non-negative coefficients, and
$\mathcal{P}(1)=1$\,. Hence, $\mathcal{P}(x)$ is a generating
function of some random variable.
\end{proof}

\begin{co}
Let $T_n(x)$ be a Chebyshev polynomial of degree $n$. Then
\[ \mathcal{P}(x)=\frac{1}{T_n(\frac{1}{x})} \]
is a generating function of some r.v. taking values in
$\mathbb{N}$.
\end{co}

\begin{proof}
When $n$ is an even number, the result follows directly from {\it
Lemma \ref{Lem1}} and from the properties of Chebyshev
polynomials. For odd $n$, consider the representation \\
$T_n(x)=xP_{n-1}(x)$, where $P_{n-1}(x)$ is a polynomial by the
even degrees of $x$, satisfying the conditions of {\it Lemma
\ref{Lem1}}.
\end{proof}

Let us now set
$\Delta:=\left\{\frac{1}{n^2}\,,\,n=1,2,\dots\right\}$\,.\,\,Consider
the family of generating functions
\[
\mathcal{P}_p(z)=\frac{1}{T_{1/\sqrt{p}}(1/z)}\,,\,\,\,\,p\in\Delta\,.
\]
Clearly, \,\,
$\mathcal{P}_{p_1}\circ\mathcal{P}_{p_2}=\mathcal{P}_{p_2}\circ\mathcal{P}_{p_1}$\,\,
for all $p_1,p_2\in\Delta\,$\,, due to the well known property of
Chebyshev polynomials stating that\,\,$T_n(T_m(x))=T_{n\cdot
m}(x)$.\, In other words, semigroup generated by the family
$\{\,\mathcal{P}_p\,,\,\,p\in\Delta\}$ is commutative. It follows
(see e.g. \cite{HeavyTailed}) that there exists a solution to the
system of equations
\begin{equation}\label{Syst}
\varphi(t)=\mathcal{P}_p(\varphi(pt))\,\,,\,\,\,p\in\Delta\,,
\end{equation}
satisfying initial conditions
\begin{equation}\label{Cond}
\varphi(0)=1\,\,,\,\,\,\varphi^{\prime}(0)=-1\,,
\end{equation}
and the solution is unique.

Since $T_n(x)=\cos(n\cdot\arccos
x)=\cosh(n\cdot\mbox{arccosh}\,x)$\,, the direct plugging gives that
the function
\begin{equation}\label{PhiCosh}
\varphi(t)=1/\cosh\left(\sqrt{2t}\right)
\end{equation}
satisfies the system (\ref{Syst}), as well as the conditions
(\ref{Cond}). Hence, the function
\begin{equation}\label{Norm}
f(t)=\frac{1}{\cosh(at)}\,,\,\,\,a>0
\end{equation}
is actually a ch.f. of a strictly $\nu$-normal r.v.. The ch.f.
(\ref{Norm}) is, in fact, well known -- it is the ch.f. of the
{\it hyperbolic secant distribution}. Clearly,  here \,$a$\, is
the scale parameter. When $a=1$, it is the case of the {\it
standard hyperbolic secant distribution}, whose pdf has the form
\[
p(x)=\frac{1}{2}\,\mbox{sech}\,\left(\frac{\pi x}{2}\right)\,,
\]
while the cdf is
\[
F(x)=\frac{2}{\pi}\,\mbox{arctan}\,\left[\exp\left(\frac{\pi
x}{2}\right)\right]\,.
\]
Furthermore, in order to obtain the expression for the ch.f. of
strictly $\nu$-stable distributions, one just needs to apply the
relation (\ref{InfDiv}) to the strictly stable (in the usual
sense) ch.f. $h$.

\subsection{An interesting property}

Note that the function $\varphi$, as represented by
(\ref{PhiCosh}), can be viewed somewhat interesting on its own,
and so we shall address its properties and consider its cdf
$A(x)$\,(which corresponds to $\varphi(t)$ via (\ref{O5})\,\,)\,.

Let $W_1(t)$ and $W_2(t)$\,,\,\,$t\geq0$\,,\,\,be two independent
Wiener processes. Consider a r.v.
\begin{equation}\label{Xxi}
\xi = \int\limits_0^1W_1^2(t)dt+\int\limits_0^1W_2^2(t)dt\,.
\end{equation}
This r.v. is well studied, and it is known (see e.g. \cite{Talacko})
that its Laplace transform equals to
\[
\mathbf{E}\,\left[\,e^{-t\xi}\,\right]=\frac{1}{\cosh\left(\sqrt{2t}\right)}\,,
\]
which coincides with $\varphi(t)$ as given by (\ref{PhiCosh}).

Hence $A(x)$ is the cdf of the r.v. $\xi$. On the other hand, as
follows from Gnedenko's Transfer Theorem,
\[
A(x)=\underset{p\rightarrow0}
{\lim}\,\,\mathbf{P}\,\{\,p\,\nu_p<x\,\}\,\,.
\]

Consequently, the following theorem is valid.

\begin{thm}\label{th41}
Let $\left\{\,\nu_p<x\,,\,\,p\in\Delta\,\right\}$ be a family of
r.v.'s having generating functions
\[
\mathcal{P}_p(z)=\frac{1}{T_{1/\sqrt{p}}(\frac{1}{z})}\,,\,\,\,\,p\in\Delta=
\left\{\frac{1}{n^2}\,,\,n=1,2,\dots\right\}\,.
\]
Then
\[ \underset{p\rightarrow0}
{\lim}\,\,\mathbf{P}\,\{\,p\,\nu_p<x\,\}=\mathbf{P}\{\xi<x\}\,, \]
where the r.v. $\xi$ is the one defined via (\ref{Xxi})\,.
\end{thm}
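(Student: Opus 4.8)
The plan is to establish the asserted weak convergence directly, by computing the Laplace transform of the nonnegative random variable $p\,\nu_p$ and showing that it converges to $\varphi(t)=1/\cosh(\sqrt{2t})$. Since $\mathcal{P}_p(z)=\mathbf{E}[z^{\nu_p}]$, substituting $z=e^{-tp}$ gives, for $t\ge 0$,
\[
\mathbf{E}\left[e^{-t\,p\nu_p}\right]=\mathcal{P}_p\!\left(e^{-tp}\right)=\frac{1}{T_{N}\!\left(e^{tp}\right)},\qquad N:=\frac{1}{\sqrt{p}},
\]
using the defining relation $\mathcal{P}_p(z)=1/T_{1/\sqrt p}(1/z)$. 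As $p$ runs through $\Delta=\{1/n^2\}$, the index $N=n$ is a positive integer tending to $\infty$, so $T_N$ is a genuine Chebyshev polynomial throughout and the hyperbolic representation is available.

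The heart of the argument is the asymptotic analysis of the right-hand side via $T_N(x)=\cosh\!\big(N\,\mbox{arccosh}\,x\big)$, valid for $x\ge 1$; here $x=e^{tp}\ge 1$. I would expand $\mbox{arccosh}(1+u)=\sqrt{2u}\,(1+O(u))$ as $u\to 0$, together with $e^{tp}-1=tp+O(p^2)$, to obtain $\mbox{arccosh}(e^{tp})=\sqrt{2tp}\,(1+O(p))$ as $p\to 0$. Multiplying by $N=1/\sqrt p$ then yields $N\,\mbox{arccosh}(e^{tp})=\sqrt{2t}\,(1+O(p))\to\sqrt{2t}$, whence $T_N(e^{tp})\to\cosh(\sqrt{2t})$ and consequently
\[
\mathbf{E}\left[e^{-t\,p\nu_p}\right]=\frac{1}{T_N(e^{tp})}\longrightarrow\frac{1}{\cosh(\sqrt{2t})}=\varphi(t),\qquad t\ge 0.
\]
This is exactly the explicit realization, for the present family, of the conclusion already quoted from Gnedenko's Transfer Theorem. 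The delicate point it resolves is that $\mathcal{P}_p'(1)=\mathbf{E}[\nu_p]=1/p\to\infty$, so one cannot simply replace $e^{-tp}$ by $\varphi(pt)$ without control; the explicit Chebyshev form makes the cancellation between the blow-up $N=1/\sqrt p$ and the vanishing $\mbox{arccosh}(e^{tp})=O(\sqrt p)$ transparent. I expect this cancellation to be the only genuine obstacle, and it is handled by keeping the error terms in the expansion of $\mbox{arccosh}$ to the stated order.

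To conclude, I would invoke the continuity theorem for Laplace transforms: since the limit $\varphi(t)$ is continuous at $t=0$ with $\varphi(0)=1$, the pointwise convergence of $\mathbf{E}[e^{-t\,p\nu_p}]$ on $[0,\infty)$ implies that $p\,\nu_p$ converges weakly, as $p\to 0$, to the unique nonnegative distribution whose Laplace transform is $\varphi$. By the uniqueness theorem for Laplace transforms and the cited result of Talacko, that distribution is precisely the law of $\xi=\int_0^1 W_1^2\,dt+\int_0^1 W_2^2\,dt$, since $\mathbf{E}[e^{-t\xi}]=1/\cosh(\sqrt{2t})=\varphi(t)$. Finally, because $\xi$ has an absolutely continuous distribution its cdf is continuous everywhere, so the weak convergence upgrades to convergence of the distribution functions at every $x$, giving $\lim_{p\to0}\mathbf{P}\{p\,\nu_p<x\}=\mathbf{P}\{\xi<x\}$ as claimed.
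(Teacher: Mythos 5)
Your proof is correct, but it follows a genuinely different route from the paper's. The paper treats the theorem as an immediate corollary of the general machinery it has already assembled: having verified by direct substitution that $\varphi(t)=1/\cosh(\sqrt{2t})$ is the unique solution of the system $\varphi(t)=\mathcal{P}_p(\varphi(pt))$ with $\varphi(0)=1$, $\varphi^{\prime}(0)=-1$ (which requires the commutativity of the semigroup $\mathcal{A}$, guaranteed here by $T_n(T_m(x))=T_{nm}(x)$), it identifies the associated cdf $A$ with the law of $\xi$ via Talacko's formula $\mathbf{E}\left[e^{-t\xi}\right]=1/\cosh(\sqrt{2t})$, and then simply quotes Gnedenko's Transfer Theorem (Theorem 2.2 of the paper, applied with $X_j\equiv 1$) to get $\mathbf{P}\{p\,\nu_p<x\}\rightarrow A(x)$. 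You bypass both the functional-equation apparatus and the Transfer Theorem: you compute $\mathbf{E}\left[e^{-tp\nu_p}\right]=1/T_n\left(e^{tp}\right)$ explicitly, extract the limit from the hyperbolic representation $T_n(x)=\cosh\left(n\,\mbox{arccosh}\,x\right)$ together with the expansion $\mbox{arccosh}\left(e^{tp}\right)=\sqrt{2tp}\,(1+O(p))$, and then conclude by the continuity theorem for Laplace transforms, Talacko's identification, and the continuity of the limiting cdf. Your asymptotic computation is sound --- the cancellation between $n=1/\sqrt{p}$ and $\mbox{arccosh}\left(e^{tp}\right)=O(\sqrt{p})$ is exactly the point that needs control, and you control it --- and the final passage from weak convergence to convergence of $\mathbf{P}\{p\,\nu_p<x\}$ at every $x$ is legitimate since $\xi$ has an absolutely continuous law. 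What your approach buys is self-containedness: the convergence is proved from scratch for this specific family, without invoking commutativity, the uniqueness theory for the system of functional equations, or the Transfer Theorem as black boxes. What the paper's approach buys is brevity and generality: once the general Properties are in place, the same two-line argument applies to any family $\{\nu_p\}$ generating a commutative semigroup, with no polynomial asymptotics required.
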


Theorem \ref{th41} may be reformulated in the following way.
\par
{\it Let
\[ \frac{1}{T_{n}(\frac{1}{z})}= \sum_{k=0}^{\infty}p_k(n)z^k.\]
Then
\[ \lim_{n \to \infty} \sum_{k=0}^{[n^2x]} p_k(n) = \mathbf{P}\{\xi<x\}.  \]
}

\begin{figure}
\begin{center}
\includegraphics[totalheight=6cm]{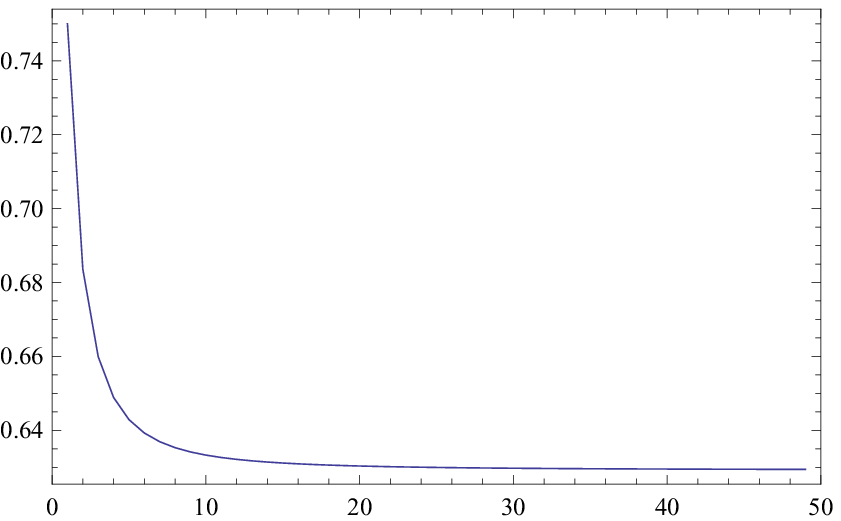}
\end{center}
\label{fig1}
\caption{Plot of the $\sum_{k=0}^{[n^2x]} p_k(n) $ as the function of $n=2, \ldots , 50$}
\end{figure}

On Figure 1, the plot of the $\sum_{k=0}^{[n^2x]} p_k(n)$ is given
as a function of $n$ starting with $n=2$ until $n=50$. We see that
the functions attains the constant level rather quickly, and
therefore it is possible to use the asymptotic result for $n>25$.

\begin{co}
Let $X$ be a r.v. having the standard hyperbolic secant
distribution. Then its distribution can be represented in the form
of a scale mixture of a normal distribution with zero mean and
standard deviation $\sqrt{\xi}$, where $\xi$ is defined via
(\ref{Xxi})\,.
\end{co}
To prove the above, one just needs to write the ch.f. of $X$ in the
form $\int\limits_0^\infty e^{-t^2x}dA(x)$\,,\,\,and note that
$e^{-t^2x}$ is actually the ch.f. of the standard Normal r.v.
$N(0,\sigma^2)$\,\,($\sigma^2=x$), while $A(x)$ is the cdf of
$\xi$\,.

Note that there is a certain analogy between the representation
$A(x)$ as the cdf of the r.v. $\xi$ from (\ref{Xxi}) and the
corresponding result in the scheme of the random summation with
geometric distribution (see e.g. \cite{HeavyTailed}). Specifically,
considering the family $\left\{\nu_p\,,\,\,p\in(0,1)\right\}$ having
the geometric distribution
$\mathbf{P}\,\{\,\nu_p=k\,\}=p(1-p)^{k-1}$\,,\,\,$k=1,2,\dots$\,,
 the function\,\,\,$\varphi$\,\,\,turns into
\[
\varphi(t)=\frac{1}{1+t}=\int\limits_0^\infty e^{-tx}dA_1(x)\,,
\]
where $A_1(x)$ is the cdf of the exponential
distribution,\,\,i.e.\,\,
$A_1(x)=1-e^{-x}$\,\,for\,\,$x>0$\,\,\,and
$A_1=0$\,\,for\,\,$x\leq0$\,. It can be checked that if $\eta_1$
and $\eta_2$ are two independent standard Normal r.v.'s, then
$A_1$ is a cdf of the r.v.
\,\,$\xi_1=\eta^2_1+\eta^2_2$\,\,,\,\,which is, in a way, related
to (\ref{Xxi}).

\subsection{Characterizations}

Let us now turn to the characterizations of the distribution of the
r.v. (\ref{Xxi}) and of the hyperbolic secant distribution.

\begin{thm}
Let $X_1,\dots,X_n,\dots$ be a sequence of non-negative iid random
variables, and
$\nu_p\,,\,\,p\in\left\{\frac{1}{n^2}\,,\,n=2,\dots\right\}$\,, is
a family of the r.v.'s having the generating function\,\,
$\mathcal{P}_p(z)=\frac{1}{T_{1/\sqrt{p}}(\frac{1}{z})}$\,,\,\,independent
of the sequence\,\, $\{X_j\,,\,\,j\geq1\}$\,.

If, for some fixed $p\in\Delta$\,,
\begin{equation}\label{X1d}
X_1\overset{d}{=}\,p\sum\limits_{j=1}^{\nu_p}X_j\,\,,
\end{equation}
(where "$\overset{d}{=}$" is the equality in distribution), then
$X_1$ has the distribution whose Laplace transform is
\begin{equation}\label{XLapl}
\mathbf{E}\left[\,e^{-tX}\,\right]=\frac{1}{\cosh\left(\sqrt{at}\right)}\,,\,\,a>0\,.
\end{equation}
\end{thm}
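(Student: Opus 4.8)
The plan is to turn the single distributional identity (\ref{X1d}) into one functional equation for the Laplace transform and then combine the semigroup identity for Chebyshev polynomials with an Abelian argument to pin the transform down. Write $\psi(t)=\mathbf{E}\left[e^{-tX_1}\right]$ for $t\ge 0$; since $X_1\ge 0$ this is well defined, strictly positive, continuous, and satisfies $0<\psi\le 1$ with $\psi(0)=1$. Conditioning the random sum $p\sum_{j=1}^{\nu_p}X_j$ on $\nu_p$ and using independence of $\nu_p$ from the $X_j$, its Laplace transform is $\mathcal{P}_p\left(\psi(pt)\right)$. Hence (\ref{X1d}) is equivalent to
\[
\psi(t)=\mathcal{P}_p\left(\psi(pt)\right)=\frac{1}{T_m\left(1/\psi(t/m^2)\right)},\qquad m:=1/\sqrt p\in\{2,3,\dots\},
\]
where I have used $pt=t/m^2$. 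This is a single equation (one fixed $p$), not the full family, so the uniqueness already recorded in \emph{Property 1} does not apply directly.

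Next I would linearize through the hyperbolic cosine. Setting $g:=1/\psi\ge 1$ turns the equation into $g(t)=T_m\left(g(t/m^2)\right)$. Because $g\ge 1$ I may write $g=\cosh w$ with $w:=\mbox{arccosh}\,g\ge 0$; note $\psi(t)\to 1$ as $t\downarrow 0$ forces $w(t)\to 0$. Invoking the identity $T_m(\cosh u)=\cosh(mu)$ and injectivity of $\cosh$ on $[0,\infty)$, the relation collapses to the purely multiplicative functional equation
\[
w(t)=m\,w(t/m^2),\qquad t\ge 0 .
\]

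The decisive observation is that $\rho(t):=w(t)^2/t$ is invariant under $t\mapsto t/m^2$, since $w(t/m^2)^2/(t/m^2)=m^2w(t/m^2)^2/t=w(t)^2/t$. Iterating gives $\rho(t)=\rho(t/m^{2k})$ for every $k$, and $t/m^{2k}\to 0$. To evaluate the limit I would use the elementary Abelian fact that $(1-\psi(t))/t=\int_0^\infty\frac{1-e^{-tx}}{t}\,dF(x)$ increases monotonically, as $t\downarrow 0$, to $\mathbf{E}X_1\in[0,\infty]$ by monotone convergence. Since $w(t)\to 0$ we have $1-\psi\sim w^2/2$, so $(1-\psi(t))/t\sim\rho(t)/2$ and therefore $\lim_{t\downarrow 0}\rho(t)=2\,\mathbf{E}X_1$ exists. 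Combined with the scale invariance this forces $\rho(t)\equiv 2\,\mathbf{E}X_1=:a$ for all $t$, whence $w(t)=\sqrt{at}$ and $\psi(t)=1/\cosh\left(\sqrt{at}\right)$, which is exactly (\ref{XLapl}).

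Two loose ends finish the argument: if $\mathbf{E}X_1=\infty$ then $\rho\equiv\infty$, so $w\equiv\infty$ and $\psi\equiv 0$ on $(0,\infty)$, contradicting $\psi(0^+)=1$, and if $\mathbf{E}X_1=0$ then $X_1\equiv 0$ degenerately, so $a=2\,\mathbf{E}X_1>0$ in the non-trivial case. The heart of the matter — and the only place regularity is genuinely used — is eliminating the pathological solutions of $w(t)=m\,w(t/m^2)$: this equation by itself admits the whole family $w(t)=\sqrt t\,K(\log t)$ with $K$ periodic of period $2\log m$, so uniqueness fails without extra input. What rescues it is precisely that $\psi$ is the Laplace transform of a non-negative law, which guarantees the one-sided limit of $\rho$ at $0$; a periodic $K$ can possess a limit at $-\infty$ only if it is constant. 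I expect this interplay between the scale invariance coming from a \emph{single} Chebyshev equation and the Abelian monotonicity coming from non-negativity to be the subtle point, the preceding algebraic reductions being routine.
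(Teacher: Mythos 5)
Your proof is correct, and it takes a genuinely different route from the paper's. Both arguments begin the same way, conditioning on $\nu_p$ to convert the hypothesis (\ref{X1d}) into the single functional equation $\psi(t)=\mathcal{P}_p\left(\psi(pt)\right)$ for the Laplace transform; but from that point the paper does not argue directly at all. It observes that $\Psi_a(t)=1/\cosh\left(\sqrt{at}\right)$ solves the equation and is analytic in a strip, and then outsources uniqueness to the method of intensively monotone operators of Kakosyan, Klebanov and Melamed \cite{KKM}: the family $\{\Psi_a\,,\,a>0\}$ is strongly $\varepsilon$-positive (their Example 1.3.2), the operator $f\mapsto\mathcal{P}_p(f(p\,\cdot\,))$ is intensively monotone, and their Theorem 1.1.1 yields that any solution in the relevant class coincides with some $\Psi_a$. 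You instead prove uniqueness by hand: the substitution $1/\psi=\cosh w$ together with $T_m(\cosh u)=\cosh(mu)$ linearizes the Chebyshev equation into the exact scaling law $w(t)=m\,w(t/m^2)$, and the monotone-convergence (Abelian) fact that $(1-\psi(t))/t$ increases to $\mathbf{E}X_1$ as $t\downarrow0$ is exactly what eliminates the periodic solutions $w(t)=\sqrt{t}\,K(\log t)$ that scaling alone would permit — this is the correct identification of where the real work lies. Your route is self-contained and yields information the paper's proof does not: the scale parameter is identified as $a=2\,\mathbf{E}X_1$, the relation (\ref{X1d}) is shown to force $\mathbf{E}X_1<\infty$, and the degenerate solution $X_1\equiv0$ (which satisfies the hypothesis but not the conclusion, and which the paper passes over silently) is made explicit. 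What the paper's approach buys in exchange is uniformity and brevity: the same operator-theoretic template disposes of the companion theorem for symmetric random variables and characteristic functions by merely replacing Example 1.3.2 with Example 1.3.1 of \cite{KKM}, whereas your Abelian step — monotonicity of $(1-\psi(t))/t$ — is specific to Laplace transforms of non-negative laws and would need genuine reworking in the characteristic-function setting, where $(1-f(t))/t^2$ need not converge monotonically.
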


\begin{proof}

The equality (\ref{X1d}), in terms of the Laplace transform
$\Psi(t)=\mathbf{E}\,e^{-tX}$\,,\,\,can be represented as
\begin{equation}\label{PhiLap}
\Psi(t)=\mathcal{P}_p\left(\Psi(pt)\right)\,.
\end{equation}
Clearly, the function
\[
\Psi_a(t)=\frac{1}{\cosh\left(\sqrt{at}\right)}
\]
satisfies (\ref{PhiLap}) for any $a>0$ and, moreover, is analytic
in the strip $\,|\,t\,|\,<r$\,\,(\,$r>0$\,)\,.

In the following, we use the results of the book by Kakosyan,
Klebanov and Melamed \cite{KKM}.
Example 1.3.2 of this book shows that $\{\,\Psi_a\,,\,\,a>0\,\}$
forms a strongly $\varepsilon$-positive family, where
$\varepsilon$ is a set of restrictions of Laplace transforms of
probability distributions given in $R_+$ on an interval \,$[0,T]$
\,$(0<T<r)$\,.

Clearly, the operator
\,\,$A\,:\,f\,\rightarrow\,\mathcal{P}_p(f(pt))$ on $\varepsilon$ is
intensively monotone.

The result follows from Theorem 1.1.1 of the above mentioned book
(page 2).
\end{proof}

\begin{thm}
Let $X_1,\dots,X_n,\dots$ be a sequence of non-negative iid random
variables,  having a symmetric distribution, while
$\left\{\nu_p\,,\,\,p\in\Delta\right\}$ is the same family as in the
previous Theorem.

If, for some fixed $p\in\Delta$\,,
\begin{equation}\label{X1d2}
X_1\overset{d}{=}\,p^{1/2}\sum\limits_{j=1}^{\nu_p}X_j\,\,,
\end{equation}
then $X_1$ has the hyperbolic secant distribution whose ch.f. is
\begin{equation}\label{XLap2}
f(t)=\frac{1}{\cosh\left(at\right)}\,,\,\,a>0\,.
\end{equation}
\end{thm}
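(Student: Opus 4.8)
The plan is to reproduce the argument of the previous theorem, now working with the characteristic function of a symmetric law in place of the Laplace transform of a non-negative one, and with the scaling $p^{1/2}$ in place of $p$. First I would translate the distributional identity (\ref{X1d2}) into a functional equation for the characteristic function $f(t)=\mathbf{E}\,e^{itX_1}$. Conditioning on $\nu_p$ and using the independence of $\{X_j\}$ from $\nu_p$ gives
\[
f(t)=\mathbf{E}\left[\left(f(p^{1/2}t)\right)^{\nu_p}\right]=\mathcal{P}_p\!\left(f(p^{1/2}t)\right),
\]
so that every solution must satisfy $f(t)=\mathcal{P}_p(f(p^{1/2}t))$ with $\mathcal{P}_p(z)=1/T_{1/\sqrt{p}}(1/z)$. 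Symmetry of $X_1$ enters here only to guarantee that $f$ is real and even, which is what allows the candidate $1/\cosh(at)$ (a real, even function) to be a characteristic function at all.

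Next I would verify that the candidate $f_a(t)=1/\cosh(at)$ solves this equation for every $a>0$. Since $f_a(p^{1/2}t)=1/\cosh(ap^{1/2}t)$, we have $1/f_a(p^{1/2}t)=\cosh(ap^{1/2}t)$, and the Chebyshev identity $T_n(\cosh u)=\cosh(nu)$ with $n=1/\sqrt{p}$ and $u=ap^{1/2}t$ yields $T_{1/\sqrt{p}}(1/f_a(p^{1/2}t))=\cosh(at)$, whence $\mathcal{P}_p(f_a(p^{1/2}t))=1/\cosh(at)=f_a(t)$. I would also record that $f_a$ is real, even, strictly positive in a neighbourhood of the origin, and analytic in the strip $|t|<\pi/(2a)$, which is what places it in the analytic framework used for uniqueness.

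The uniqueness would then be obtained exactly as in the preceding theorem, through the apparatus of Kakosyan, Klebanov and Melamed \cite{KKM}. Because $f$ is continuous with $f(0)=1$, it is strictly positive on some interval $[0,T]$ with $T<r$; restricting attention to this interval places $f$ in the class $\varepsilon$ of restrictions of (symmetric) characteristic functions. One checks that $\{f_a\,,\,a>0\}$ is a strongly $\varepsilon$-positive family — here the strictly monotone dependence of $1/\cosh(at)$ on the parameter $a$ for each fixed $t>0$ plays the role that Example 1.3.2 of \cite{KKM} played in the Laplace-transform case — and that the operator $A\,:\,f\mapsto\mathcal{P}_p(f(p^{1/2}t))$ is intensively monotone on $\varepsilon$. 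Theorem 1.1.1 of \cite{KKM} then forces any fixed point of $A$ within $\varepsilon$ to coincide with a member of the family, so that $f(t)=1/\cosh(at)$ for some $a>0$, which is (\ref{XLap2}).

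The step I expect to be the main obstacle is the verification that the symmetric-characteristic-function setting genuinely fits the $\varepsilon$-positive and intensively monotone framework of \cite{KKM}, which was originally tailored to Laplace transforms of non-negative variables. Unlike a Laplace transform, a symmetric characteristic function need not stay positive on the whole half-line, so the entire argument must be confined to an interval $[0,T]$ on which positivity, analyticity, and the monotone ordering of the family $\{f_a\}$ hold simultaneously. Establishing that such a $T$ can be fixed so that the intensive monotonicity of $A$ and the strong $\varepsilon$-positivity of the family are both available, and hence that the fixed point is pinned down uniquely, is the delicate point; once it is in place, the conclusion is immediate from the cited theorem.
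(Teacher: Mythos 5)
Your proposal is correct and follows essentially the same route as the paper: derive the functional equation $f(t)=\mathcal{P}_p(f(p^{1/2}t))$, verify that $f_a(t)=1/\cosh(at)$ solves it via the Chebyshev identity, and invoke the intensively monotone operator framework of \cite{KKM} (Theorem 1.1.1) for uniqueness. The delicate point you flag — that the strongly $\varepsilon$-positive property must be re-established for the family $\{1/\cosh(at)\,,\,a>0\}$ in the symmetric characteristic-function setting rather than the Laplace-transform setting — is exactly what the paper disposes of by citing Example 1.3.1 of \cite{KKM} in place of Example 1.3.2.
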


\begin{proof}

Quite analogous to the proof of the previous Theorem, with the
difference that instead of Example 1.3.2, the use of the Example
1.3.1 from \cite{KKM} is sufficient.
\end{proof}

\section{Other examples}

There exist examples of the pairs of commutative functions, which
are {\it not rational}. Here we refer to the two classes of such
functions, the first of which was investigated by Melamed \cite{MLN} and the second appears at first in the present work.

{\bf Example I\,.}\,\,\, (See Melamed \cite{MLN}  for detailed study) \par

Consider the family of generating functions
\begin{equation}\label{Ex1}
\mathcal{P}_p(z)=\frac{p^{1/m}\,z}{(1-(1-p)\,z^m)^{1/m}}\,\,,
\end{equation}
where \,\,$p\in(0,1)$\,,\,\,and $m$ is a fixed positive integer.
Obviously, in the case $m=1$, $\mathcal{P}_p(z)$ reduces to the
generating function of the geometric distribution, and has already
been mentioned this case above. Hence, assume that $m\geq2$. In
that case, it is easy to check that
\begin{equation}\label{Ex1-2}
\varphi(t)=\frac{1}{(1+mt)^{1/m}}\,\,,
\end{equation}
and therefore the ch.f. of the strictly $\nu$-normal distribution
(for the family $\left\{\nu_p\,,\,\,p\in\Delta\right\}$ \,having
the generating function (\ref{Ex1})\,) has the form
\[
\varphi(t)=\frac{1}{(1+mat^2)^{1/m}}\,\,,
\]
with a parameter $a>0$\,.

\vspace*{0.4cm}

{\bf Example II\,.}\,\,\par

Consider the family of functions
\begin{equation}\label{Ex2}
\mathcal{P}_p(z)=\frac{1}{\left(T_{1/\sqrt{p}}\left(\frac{1}{z^m}\right)\right)^{1/m}}\,\,,
\end{equation}
where
\,\,$p\in\left\{\frac{1}{n^2}\,,\,n=2,\dots\right\}$\,,\,\,and
$m\geq1$\,(an integer)\,.

Using a slightly modified version of the proof of {\it Lemma 1},
it is easy to check that $\mathcal{P}_p(z)$ is a generating
function of some r.v.
$\nu_p\,,\,\,p\in\left\{\frac{1}{n^2}\,,\,n=2,\dots\right\}$ for
any fixed whole number $m\geq1$ (surely, both
$\mathcal{P}_p$\,\,\,and\,\,$\nu_p$ both depend on $m$, but we
omit this dependence in the notation).

The case $m=1$ has already been considered above. For $m\geq2$
analogous methods are applicable, and so will refer to the results
only. Specifically,
\begin{equation}\label{Ex2-2}
\varphi(t)=\frac{1}{\left(\cosh\sqrt{2mt}\right)^{1/m}}\,\,,
\end{equation}
while the ch.f. of the corresponding strictly $\nu$-normal
distribution has the form
\begin{equation}\label{Ex2-3}
f(t)=\frac{1}{\left(\cosh at\right)^{1/m}}\,\,,
\end{equation}
where $a>0$\,.

Note that in the case $m=2$, we have the following expressions for
the distributions whose Laplace transforms are (\ref{Ex1-2}) and
(\ref{Ex2-2}).

For $m=2$, the formula (\ref{Ex1-2}) gives
\[
\varphi(t)=\frac{1}{\sqrt{1+2t}}\,\,.
\]
This function is the Laplace transform of the distribution of the
r.v. $X^2$\,,\,with $X$ being the standard Normal r.v.

In a similar way, (\ref{Ex2-2}) gives for $m=2$
\[
\varphi(t)=\frac{1}{\sqrt{\cosh\sqrt{4t}}}\,\,.
\]

This function is the Laplace transform of the distribution of the
r.v. $I=\int\limits_0^1X^2(t)dt$\,,\,\,where $X(t)$ is the standard
Wiener process.

{\small

}

\end{document}